\newtheorem{theorem}{Theorem}
\theoremstyle{plain}
\newtheorem{lemma}{Lemma}
\newtheorem{proposition}{Proposition}
\theoremstyle{definition}
\theoremstyle{remark}
\begin{document}

\title{A New Direct Proof of the Central Limit Theorem}

\date{24 July 2015}

\author{Vladimir \mbox{Dobri\ensuremath{\acute{\text{c}}}} }
\address{Department of Mathematics, Lehigh University, 14 East Packer Avenue, 
Bethlehem, PA 18015}
\email{vd00@lehigh.edu}

\author{Patricia Garmirian}
\address{Department of Mathematics, Tufts University, Boston Avenue, 
Medford, MA 02155}
\email{Patricia.Garmirian@tufts.edu (corresponding author)}

\thanks{Garmirian dedicates this paper to the memory of her dearly departed mentor, Vladimir Dobri\'c, and acknowledges the helpful comments of Lee Stanley, Rob Neel, and Daniel Conus.}

\subjclass[2010]{Primary 60F05 ; Secondary 42C40, 28A33}

\keywords{Central Limit Theorem, Haar basis}

\begin{abstract}
We prove the Central Limit Theorem (CLT) from the definition of weak convergence using the Haar wavelet basis, calculus, and elementary probability. The use of the Haar basis pinpoints the role of $L^{2}([0,1])$ in the CLT as well as the assumption of finite variance. We estimate the rate of convergence and prove strong convergence off the tails.
\end{abstract}

\maketitle

\makeatletter

\section{Introduction}
The Central Limit Theorem (CLT) is one of the most fundamental theorems of probability theory. The theorem states that standardized sums of i.i.d. random variables having finite variance converge weakly to the standard normal distribution. As early as the 1770s, mathematicians were searching for the ``central limit," trying to establish the correct conditions for convergence in distribution and the formula for the limiting distribution. The connection between convergence in distribution and characteristic functions was established in the 1920s by L\'evy. 

The connection between convergence in distribution and weak convergence was established in the late 1940s (see [6]).  For a measurable space $(S,\mathcal{B}(S))$, where $S$ is a Polish space, a sequence of measures $\mu_{n}$ converges weakly to $\mu$ provided that for each bounded, continuous function $f:S\rightarrow{\mathbb{R}}$,
\[
\lim_{n\rightarrow{\infty}}\int_{S}^{}f(x)\, d\mu_{n}(x)=\int_{S}^{}f(x)\, d\mu(x).
\]
The advantage of using this definition is that it may serve as a stepping-stone to extending the CLT to random variables having values in more general spaces.

A stronger type of convergence for measures than ``weak" convergence is ``strong" convergence. For a measurable space $(S,\mathcal{B}(S))$, a sequence of measures $\mu_{n}$ converges strongly to $\mu$ provided that for each set $A\in{\mathcal{B}(S)}$,
\[
\lim_{n\rightarrow{\infty}}\mu_{n}(A)=\mu(A).
\]
We show that the type of convergence in the CLT is in fact strong off the tails.
 
In 1935, both Feller, [5],  and  L\'evy (independently), [8], [9], proved the Central Limit Theorem (CLT)  using characteristic functions.  
As the CLT is a fundamental theorem in probability theory, since the time of Donsker many experts have believed that there should be a direct proof (see [6] for example).  Also, there is obvious interest in determining rates and constants of convergence and these
characteristic function proofs gave no information about these issues.

Since 1935, there have been a number of more elementary or direct proofs of the CLT which do not use characteristic functions,
e.g., [1], [2], [3], [4], [10] and [11].   The last two prove the CLT directly from the definition of
weak convergence and [2] and (taken together) [1], [3], [4], do give a rate of convergence of $n^{-1/2}$ as well as a constant of convergence.  However, all of these proofs involve a hypothesis stronger than the optimal hypothesis of Feller and L\'evy, namely finite variance.
The hypothesis of [1], [2], [3], [4] and [11] is finite third moment, while the hypothesis of [10] is continuous second derivative
of the function $f$ in the definition of weak convergence.

We present a proof, directly from the definition of weak convergence, avoiding characteristic functions, which is optimal in terms of hypothesis (as in [5], [8], [9]).  We elaborate on the next complex of notions in what follows, but for now, it will suffice to say 
that the proof involves multinomial approximations to the initial sum of random variables, and identifies ``tails'' of both these
multinomials and the Gaussian.  We obtain strong convergence off these tails, i.e. the sum of the absolute values of the differences between the multinomial and Gaussian probabilities converges to zero.
Our proof also provides estimates of the rate and constant of convergence off these tails; the former is comparable to the rates of [2] and ([1], [3], [4]).

The translation into the language of random variables of the above definition of weak convergence is established by letting $\mu_{n}=P\circ{X_{n}^{-1}}$. A sequence of random variables $(X_{n})$ converges  weakly to a random variable $X$ if for each bounded, continuous function $f:\mathbb{R}\rightarrow{\mathbb{R}}$, 
\[
\lim_{n\rightarrow{\infty}}E(f(X_{n})) = E(f(X)),
\]
and it is this translation that we use in what follows. A sequence of random variables $X_{n}$ on a measurable space $(S,\mathcal{B}(S))$ converges strongly to a random variable $X$ if for each $A\in{\mathcal{B}(S)}$,
\[
\lim_{n\rightarrow{\infty}}P(X_{n}\in{A})=P(X\in{A}).
\] 
In the case where $X_{n}$ and $X$ are discrete with range set $J$, by the triangle inequality, it is sufficient to show that
\[
\lim_{n\rightarrow{\infty}}\sum_{j\in{J}}^{}\left|P(X_{n}=j)-P(X=j)\right|=0.
\] 
In fact, approximating the initial sum of random variables and the Gaussian by discrete versions, the preceding statement holds for the sequences in the CLT off the tails. 

Our proof employs the expansion of random variables on [0,1] (equipped with Lebesgue measure, on Borel sets) with respect to the Haar basis. The Haar basis is the simplest orthonormal system for $L^{2}([0,1])$. By considering random variables in $L^{2}([0,1])$, this proof is consistent with the assumption of finite variance. Therefore, the Haar basis is a natural tool for proving the CLT. 

The proof proceeds as follows: 
Given an i.i.d. sequence of random variables on a probability space, we construct an i.i.d. sequence on [0,1] with the Borel sigma algebra and Lebesgue measure having the same sequence of distributions. As the new sequence of random variables is defined on [0,1] and also has finite variance, we then expand this sequence with respect to the Haar basis. 

We then reduce the problem of showing weak convergence of this new sequence of random variables to the case where the Haar expansions are truncated to have only $M$ terms, for some finite $M$ which will be chosen to accomplish certain other objectives. (Lemma 1) These truncated Haar expansions each have $m = 2^{M+1} $ possible outcomes. Next, in Proposition 1, we show that the sum of Haar expansions having only $M$ terms is in fact the projection of a multinomial random variable. 

In Lemma 2, we identify (via the constant $b_0$ introduced there) the tails of the multinomial random variable.  After cutting off these tails, we compute the probabilities for the multinomial distribution using Stirlings's formula and Taylor series approximation (Lemma 3). The appearance of the Gaussian density on the multinomial side can be seen in this step.

On the Gaussian side, we express a standard normal random variable as a sum of $m$ independent normal random variables with coefficients being the outcomes of the truncated Haar expansion. We then apply Fubini's Theorem to 
reduce by one dimension the expression for the expected value on the Gaussian side as an integral over a hyperplane in $\mathbb{R}^{m}$ (Lemma 4).  In Lemma 5, we identify (via the constant $b_1$ introduced there) the tails of the Gaussian.  After cutting off these tails, we approximate the integral by a Riemann sum.  In Proposition 2, we pull together the results of Lemmas 4 and 5. The Riemann and the multinomial sums match perfectly. 

In Theorem 1, by bounding the function $f$ by its sup norm, we estimate the sum of the absolute values of the differences between the multinomial and Gaussian probabilities, establishing strong convergence off the tails.  It is here that we also obtain the rate of convergence of $n^{-1/2}$ and the constant for convergence of $\frac{2m^{2}}{3\sqrt{2\pi}}$, also off the tails.
In both instances, the restriction to ``off the tails'' arises since our truncations
(of the Haar expansions, the multinomial sum, and the Gaussian Riemann 
sum) are  based Chebyshev's inequality, in which coarseness is the price of its generality.
Finally, in Theorem 2, we pull together the preceding results to prove the CLT.  
\section{Preliminary Estimates}
Let $\epsilon>0$. Let $f:\mathbb{R}\rightarrow{\mathbb{R}}$ be a bounded, continuous function. Let $Z$ be a random variable on a probability space $(\Omega,\mathcal{F},P)$. We may assume that E($Z$)=0 and var($Z$)=1. Define the quantile of $Z$ to be the function $X:[0,1]\rightarrow{\mathbb{R}}$ defined by
\[
X(x) := \inf\{y\in{\mathbb{R}}|P(Z\leq{y})\geq{x}\}.
\]
Then, $X$ is a random variable on [0,1] (equipped with Lebesgue measure, on Borel sets) having the same distribution as $Z$.

For $x\in{(0,1)}$, let $\epsilon_{i}(x)$ be the ith bit in the binary expansion of $x$ (for dyadic rationals, choose the expansion with the tail of 0's). We create the following matrix of binary digits:

\[ \left( \begin{array}{cccc}
{\epsilon}_{1} & {\epsilon}_{3} & {\epsilon}_{6} \\
{\epsilon}_{2} & {\epsilon}_{5} & {\epsilon}_{9} & {\hdots} \\
{\epsilon}_{4} & {\epsilon}_{8} & {\epsilon}_{13} & {}  \\
{}&{\vdots} &{}&{}\end{array}\right)\] 
For all $x\in(0,1)$, define $P_{i}(x)$ to have binary expansion given by the ith column of the matrix. Let $X_{i}(x):=X(P_{i}(x))$. Then, $(X_{i})$ is an i.i.d. sequence of random variables on $[0,1]$ having the same distribution as $X$. 

Note that by assumption, $X\in{L^{2}{([0,1])}}$. The Haar basis is the simplest orthonormal system in $L^{2}([0,1])$ and consists of the set $S=\left\{H_{j,k}(x)|0\leq{j}<{\infty}, 0\leq{k}\leq{2^{j}-1}\}\cup\{\chi_{[0,1]}\right\}$ where

\[  H_{j,k}(x) := \left\{
\begin{array}{ll}
       2^{\frac{j}{2}} & x\in{[\frac{k}{2^{j}},\frac{k+\frac{1}{2}}{2^{j}})} \\
      \ -2^{\frac{j}{2}} & x\in{[\frac{k+\frac{1}{2}}{2^{j}},\frac{k+1}{2^{j}})} \\
         0 & $otherwise$\\
\end{array} 
\right. \]
Since $E(X)=0$ and $\|X\|<\infty$, it follows that
\[
X(x)=\sum_{j=0}^{\infty}\sum_{k=0}^{2^{j}-1}c_{j,k}H_{j,k}(x)
\]
where $c_{j,k}=\int_{0}^{1}X(x)H_{j,k}(x)\, dx.$ Then,
\[
X(x) = \sum_{j=0}^{\infty}\sum_{k=0}^{2^{j}-1}c_{j,k}2^{\frac{j}{2}}(-1)^{\epsilon_{j+1}(x)}{\chi}_{\{k\}}{(\lfloor2^{j}x\rfloor)} =\sum_{j=0}^{\infty}2^{\frac{j}{2}}c_{j,\lfloor2^{j}x\rfloor}(-1)^{\epsilon_{j+1}(x)},
\]
where, as usual, $\lfloor{x}\rfloor$ 
denotes the greatest integer $\leq{x}$.
For $n \geq 1$, define
\[
S_{n}(x) := \sum_{i=1}^{n}X_{i}(x)=\sum_{i=1}^{n}\sum_{j=0}^{\infty}2^{\frac{j}{2}}c_{j,\lfloor2^{j}P_{i}(x)\rfloor}(-1)^{\epsilon_{j+1}(P_{i}(x))},
\]
and for $M \geq 1$, define 
\begin{equation}
S_{n,M}(x) := \sum_{i=1}^{n}X_{i,M}(x)=\sum_{i=1}^{n}\sum_{j=0}^{M}2^{\frac{j}{2}}c_{j,\lfloor2^{j}P_{i}(x)\rfloor}(-1)^{\epsilon_{j+1}(P_{i}(x))}\text{ and}
\end{equation}
\begin{equation}
 \sigma_{M} := \sqrt{\sum_{j=0}^{M}\sum_{k=0}^{2^{j}-1}c_{j,k}^{2}}.
\end{equation}

\begin{lemma} 
Let $f:\mathbb{R}\rightarrow{\mathbb{R}}$ be a bounded, continuous function. Then, there exists a positive integer $M_{0}$ such that for all $M\geq{M_{0}}$:
\[
\left|\int_{0}^{1}f\left(\frac{S_{n}(x)}{\sqrt{n}}\right)\, d\lambda{(x)}-\int_{0}^{1}f\left(\frac{S_{n,M}(x)}{\sigma_{M}\sqrt{n}}\right)\, d\lambda{(x)}\right|<\epsilon(6\|f\|_{\infty}+1).
\]
\end{lemma}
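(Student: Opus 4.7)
The plan is a ``bounded-plus-uniformly-continuous'' truncation argument. First I fix a threshold $R>0$ and a scale $\delta>0$ and decompose $[0,1]$ according to the three bad events
\[
A=\{|S_n/\sqrt n|>R\},\;\; B=\{|S_{n,M}/(\sigma_M\sqrt n)|>R\},\;\; C=\{|S_n/\sqrt n - S_{n,M}/(\sigma_M\sqrt n)|\ge\delta\}.
\]
On the complement $(A\cup B\cup C)^{c}$ both arguments of $f$ lie in $[-R,R]$ and differ by less than $\delta$, so if $\delta$ is chosen smaller than the modulus of continuity of $f$ on $[-R,R]$ at level $\epsilon$, the integrand is bounded by $\epsilon$; on $A\cup B\cup C$ it is at most $2\|f\|_{\infty}$. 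This reduces the lemma to showing that each of $\lambda(A),\lambda(B),\lambda(C)$ can be made at most $\epsilon$, after which the target bound $\epsilon + 2\|f\|_{\infty}\cdot 3\epsilon = \epsilon(6\|f\|_{\infty}+1)$ drops out.

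For $A$ and $B$ I would use Chebyshev directly: since the $X_i$ are i.i.d.\ with variance $1$, $E((S_n/\sqrt n)^{2})=1$; and since $X_{i,M}$ has variance $\sigma_M^{2}$, $E((S_{n,M}/(\sigma_M\sqrt n))^{2})=1$ as well. Thus $\lambda(A),\lambda(B)\le 1/R^{2}$, and I pick $R$ with $R^{2}\ge 1/\epsilon$. Note that this choice of $R$, and the resulting $\delta$ coming from uniform continuity of $f$ on $[-R,R]$, depend only on $f$ and $\epsilon$, not on $M$ or $n$.

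The crux is the estimate on $\lambda(C)$; this is where the Haar expansion actually enters. The key observation is that $X_M$ is the orthogonal projection of $X\in L^{2}([0,1])$ onto $\mathrm{span}\{H_{j,k}:0\le j\le M,\,0\le k\le 2^{j}-1\}$, so $E(X X_M)=E(X_M^{2})=\sigma_M^{2}$. Using independence across $i$ to expand,
\[
E\!\left(\left(\frac{S_n}{\sqrt n}-\frac{S_{n,M}}{\sigma_M\sqrt n}\right)^{\!2}\right)=1-\frac{2}{\sigma_M}\,\sigma_M^{2}+\frac{\sigma_M^{2}}{\sigma_M^{2}}=2(1-\sigma_M).
\]
By Parseval applied to the Haar basis, $\sigma_M^{2}\uparrow\mathrm{var}(X)=1$, so $\sigma_M\to 1$ and the right-hand side tends to $0$. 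Chebyshev then yields $\lambda(C)\le 2(1-\sigma_M)/\delta^{2}$, and choosing $M_0$ large forces this below $\epsilon$ for every $M\ge M_0$.

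The main obstacle is arranging the $L^{2}$ comparison so that it telescopes cleanly. Orthogonality of the Haar projection is precisely what makes the cross term $E(XX_M)=\sigma_M^{2}$ collapse the expansion to the symmetric expression $2(1-\sigma_M)$, which visibly vanishes as $M\to\infty$; the $1/\sigma_M$ normalization in the statement of the lemma is exactly what is needed to bring about this cancellation. Everything else is routine: Chebyshev for the tails, uniform continuity of $f$ on a compact interval for the good set, and assembling the three pieces into the final constant $6\|f\|_{\infty}+1$.
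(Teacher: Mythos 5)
Your proof is correct and follows essentially the same decomposition as the paper: the same three bad sets $A$, $B$, $C$, Chebyshev for each, uniform continuity of $f$ on $[-R,R]$ for the good set, and the same assembly $2\|f\|_\infty\cdot 3\epsilon+\epsilon=\epsilon(6\|f\|_\infty+1)$. The only cosmetic difference is in bounding $\mathrm{var}\bigl(S_n/\sqrt{n}-S_{n,M}/(\sigma_M\sqrt{n})\bigr)$: you compute the exact value $2(1-\sigma_M)$ using orthogonality of the Haar projection, while the paper takes the triangle inequality in $L^2$ and arrives at the looser bound $\bigl(\sqrt{1-\sigma_M^2}+(1-\sigma_M)\bigr)^2$; both vanish as $M\to\infty$ by Parseval, so the two routes are interchangeable, though your computation is tighter and more transparent.
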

\begin{proof}
Note that $E\left(\frac{S_{n}(x)}{\sqrt{n}}\right)=0$ and var$\left(\frac{S_{n}(x)}{\sqrt{n}}\right)=1$. Let 
\[
A:=\left\{\left|\frac{S_{n}}{\sqrt{n}}\right|>L\right\}\text{ and } B_{M}:=\left\{\left|\frac{S_{n,M}}{\sigma_{M}\sqrt{n}}\right|>L\right\}.
\]
By Chebyshev's inequality,
\[
\lambda(A)\leq{\frac{1}{L^{2}}}<\epsilon\text{ and }\lambda(B_{M})\leq{\frac{1}{L^{2}}}<\epsilon
\]
for $L$ large. Since $f$ is uniformly continuous on $[-L,L]$, then there exists a $\delta>0$ such that $x,y\in{[-L,L]}$ satisfying $|x-y|<\delta$ implies that $|f(x)-f(y)|<\epsilon.$ Now, let 
\[
C_{M}:=\left\{\left|\frac{S_{n}}{\sqrt{n}}-\frac{S_{n,M}}{\sigma_{M}\sqrt{n}}\right|\geq{\delta}\right\}.
\]
There exists an $M_{0}\in{\mathbb{N}}$ such that for all $M\geq{M_{0}}$:
\[
\text{var}\left(\frac{S_{n}}{\sqrt{n}}-\frac{S_{n,M}}{\sigma_{M}\sqrt{n}}\right)
\leq(1-{\sigma_{M}}^{2})+2(1-{\sigma_{M}})\sqrt{1-{\sigma_{M}^{2}}}+(1-\sigma_{M})^{2}<\epsilon\delta^{2}
\text{ and so} \]
\[
\lambda(C_{M})\leq{\frac{\epsilon\delta^{2}}{\delta^{2}}}=\epsilon.
\]
Now, let $S:=A^{c}\cap{B_{M}}^{c}\cap{C_{M}}^{c}$.
Then, $\lambda(S^{c})<3\epsilon.$ Hence,
\[
\left|E\left(f\left(\frac{S_{n}}{\sqrt{n}}\right)\right)-E\left(f\left(\frac{S_{n,M}}{\sigma_{M}\sqrt{n}}\right)\right)\right|
\leq{2\|f\|_{\infty}{\lambda}(S^{c})}+{\epsilon}{\lambda(S)} \leq{\epsilon(6\|f\|_{\infty}+1)}.
\] 
\end{proof}
Let
\[
X_{M}(x) := \frac{1}{\sigma_{M}}\sum_{j=0}^{M}2^{\frac{j}{2}}c_{j,\lfloor2^{j}x\rfloor}(-1)^{\epsilon_{j+1}(x)}
\]
for $x\in[0,1]$.  Below, we investigate the properties of this random variable. Note that $X_{M}$ is a random variable which depends on $(\epsilon_{1},...,\epsilon_{M+1})$.   From now on, we will let $m := 2^{M+1}$ for notational convenience.
Thus, $X_{M}$ is constant on dyadic intervals of length $2^{-(M+1)} \left ( = \frac{1}{m}\right )$. Let $o_{1},...,o_m$ denote the $m$ values.
It follows that 
\[
\sum_{i=1}^mo_{i}=0\text{ and }\sum_{i=1}^mo_{i}^{2}=m
\]
as $E(X_{M})=0$ and $\text{var}(X_{M})=1$.

We will now take a closer look at the random variable $S_{n,M}$ 
of Equation (1).
Each $X_{i,M}$ is a random variable with the $m$ possible outcomes $o_{1},...,o_m$.  Let $K_{i}$ be the random variable which denotes the number of times the outcome $o_{i}$ is observed among $n$ independent trials, having outcomes $k_{i}$. Then,
\begin{equation}
S_{n,M}(x)=K_{1}(x)o_{1}+...+K_m(x)o_m 
\end{equation}
where $K_{1}+...+K_m = n$. 

\newpage

Note that $S_{n,M}$ is a scalar product of an $m$-nomial random variable and the vector of outcomes.  Since each outcome has 
probability $\frac{1}{m}$ and the trials are independent, 
\[
\lambda\left (\left \{
x\in{(0,1)}:K_{1}(x)=k_{1},...,K_m(x)=k_m\right\}\right )={n\choose{k_{1},...,k_m}}\left(\frac{1}{m}\right)\text{ and}
\]
\[
 E\left(f\left(\frac{S_{n,M}(x)}{\sqrt{n}}\right)\right)=\underset{k_{1}+...+k_m = n}{\sum_{k_{1}=0}^{n}...\sum_{k_m=0}^{n}} \frac{1}{m}{n\choose{k_{1},...,k_m}}f\left( \frac{\sum_{i=1}^{m}k_{i}o_{i}}{\sqrt{n}}\right).
\]

\begin{proposition}
Let $X$ be a random variable on [0,1] having mean $0$ and variance $1$, let
\[
S_{n}(x) := \sum_{i=1}^{n}X(P_{i}(x)),
\]
and for each $M > 0$, let $S_{n,M}$ be as in Equation (1) and $\sigma_M$ be
as in Equation (2).  Then,
for each bounded and continuous $f:\mathbb{R} \to \mathbb{R}$, and
each $\epsilon>0$, there exists $M_{0}\in\mathbb{N}$ such that for all $M \geq M_0$,
\[
\left|\int_{0}^{1}f\left(\frac{S_{n}(x)}{\sqrt{n}}\right)\, d\lambda{(x)}-\int_{0}^{1}f\left(\frac{S_{n,M}(x)}{\sigma_{M}\sqrt{n}}\right)\, d\lambda{(x)}\right| < \epsilon(6\|f\|_{\infty}+1).
\]
\end{proposition}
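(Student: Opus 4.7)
The plan is to observe that Proposition 1 is essentially a restatement of Lemma 1 for an arbitrary mean-zero, variance-one random variable $X$ on $[0,1]$, rather than for the specific quantile transform constructed at the start of Section 2. Nothing in the proof of Lemma 1 actually used how $X$ arose: all that was needed was $X\in L^{2}([0,1])$ with $E(X)=0$ and $\operatorname{var}(X)=1$, together with the fact that $X_{i}(x):=X(P_{i}(x))$ produces an i.i.d.\ sequence with the same distribution as $X$. Both are granted here, so the plan is simply to rerun the three-set argument of Lemma 1 with the more general $X$ in place.

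Concretely, for fixed $\epsilon>0$ and bounded continuous $f$, I would first choose $L$ large enough that $1/L^{2}<\epsilon$, so that Chebyshev's inequality bounds the measure of the sets $A=\{|S_{n}/\sqrt{n}|>L\}$ and $B_{M}=\{|S_{n,M}/(\sigma_{M}\sqrt{n})|>L\}$ by $\epsilon$; then use uniform continuity of $f$ on $[-L,L]$ to extract $\delta>0$ with $|x-y|<\delta$ implying $|f(x)-f(y)|<\epsilon$; and finally pick $M_{0}$ large enough that for all $M\geq M_{0}$ the variance of $S_{n}/\sqrt{n}-S_{n,M}/(\sigma_{M}\sqrt{n})$ is less than $\epsilon\delta^{2}$, so a third Chebyshev application bounds the measure of $C_{M}=\{|S_{n}/\sqrt{n}-S_{n,M}/(\sigma_{M}\sqrt{n})|\geq\delta\}$ by $\epsilon$. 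Off the union of these three bad sets (of measure less than $3\epsilon$), the two integrands differ by less than $\epsilon$; on the union, each is bounded in absolute value by $\|f\|_{\infty}$. Summing the two contributions gives exactly $\epsilon(6\|f\|_{\infty}+1)$.

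The only ingredient that requires a moment of thought is the variance estimate that forces $M_{0}$ to exist: one needs $\sigma_{M}\to 1$ as $M\to\infty$. This is Parseval's identity for the Haar basis, which is precisely where the hypothesis $\operatorname{var}(X)=1$ is used: $\sum_{j,k}c_{j,k}^{2}=\|X\|_{2}^{2}=\operatorname{var}(X)=1$, so the partial sums $\sigma_{M}^{2}=\sum_{j=0}^{M}\sum_{k=0}^{2^{j}-1}c_{j,k}^{2}$ increase to $1$, and the bound
\[
\operatorname{var}\!\left(\tfrac{S_{n}}{\sqrt{n}}-\tfrac{S_{n,M}}{\sigma_{M}\sqrt{n}}\right)\leq (1-\sigma_{M}^{2})+2(1-\sigma_{M})\sqrt{1-\sigma_{M}^{2}}+(1-\sigma_{M})^{2}
\]
tends to $0$. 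I anticipate no real obstacle here, since all the substantive work was done in Lemma 1; Proposition 1 is best viewed as a free-standing packaging of Lemma 1's conclusion, stated in the generality needed for the rest of the paper.
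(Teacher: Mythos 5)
Your proposal is correct and matches the paper's approach: the paper's proof of Proposition 1 is simply the one-line observation that it follows from Lemma 1 and the discussion following it, which is exactly what you identify. Your expansion of why Lemma 1's argument carries over unchanged — in particular, that only $E(X)=0$, $\operatorname{var}(X)=1$, $X\in L^{2}([0,1])$, and Parseval's identity for the Haar basis are used — is a faithful unpacking of what the paper leaves implicit.
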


\begin{proof}
The theorem follows from Lemma 1 and the discussion following Lemma 1.

\end{proof}

The following lemma allows us to cut off the tails from the multinomial random variable. Consequently, we prepare the ground for the usage of Taylor's formula. The tails of the multinomial random variable consist of all $(k_{1},...,k_{m})\in{\{0,1,...,n\}^{m}}$ such that $k_{1}+...+k_{m}=n$ and $k_{i}\notin[\lfloor\frac{n}{m}\rfloor-\lfloor{b\sqrt{n}}\rfloor,\lfloor\frac{n}{m}\rfloor+\lfloor{b\sqrt{n}}\rfloor]$ for all $1\leq{i}\leq{m-1}$.
\begin{lemma}
Let
\[
q(n,k_{1},...k_{m}):=\left(\frac{1}{m}\right)^{n}{n\choose{k_{1},...,k_{m}}}f\left(\frac{\sum_{i=1}^{m}k_{i}o_{i}}{\sqrt{n}}\right).
\]
Then, there exists a $b_{0}$ such that for all $b\geq{b_{0}}$:
\[
\left|\underset{k_{1}+...+k_{m}=n}{\sum_{k_{1}=1}^{n}...\sum_{k_{m}=0}^{n}}q(n,k_{1},...k_{m})-\underset{k_{1}+...+k_{m}=n}{\sum_{k_{1}=\lfloor\frac{n}{m}\rfloor-\lfloor{b\sqrt{n}}\rfloor}^{\lfloor\frac{n}{m}\rfloor+\lfloor{b\sqrt{n}}\rfloor}...\sum_{k_{m-1}=\lfloor\frac{n}{m}\rfloor-\lfloor{b\sqrt{n}}\rfloor}^{\lfloor\frac{n}{m}\rfloor+\lfloor{b\sqrt{n}}\rfloor}}{q(n,k_{1},...,k_{m}})\right|<{\epsilon}\|f\|_{\infty}.
\]
\end{lemma}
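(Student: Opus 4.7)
The plan is to pass to a probabilistic interpretation and then apply Chebyshev coordinate-wise. First, I would factor $f$ out via $|f|\le\|f\|_\infty$, so that the absolute value of the difference of the two sums is bounded by $\|f\|_\infty$ times the multinomial mass assigned to the ``tail'' set
\[
T\;:=\;\bigl\{(k_1,\dots,k_m):\;k_1+\cdots+k_m=n\text{ and }\exists\,i\le m-1\text{ with }k_i\notin[\lfloor n/m\rfloor-\lfloor b\sqrt{n}\rfloor,\lfloor n/m\rfloor+\lfloor b\sqrt{n}\rfloor]\bigr\}.
\]
Since $(1/m)^n\binom{n}{k_1,\dots,k_m}$ is precisely the probability that the multinomial vector $(K_1,\dots,K_m)$ with parameters $(n;1/m,\dots,1/m)$ equals $(k_1,\dots,k_m)$, the task reduces to showing $\lambda(\,(K_1,\dots,K_m)\in T\,)<\epsilon$ uniformly in $n$ once $b\ge b_0$.

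Next, I would apply the union bound over the $m-1$ offending coordinates. Each marginal $K_i$ is $\mathrm{Binomial}(n,1/m)$, so $E(K_i)=n/m$ and $\mathrm{Var}(K_i)=n(1/m)(1-1/m)\le n/m$. The key quantitative step is to convert the interval with integer endpoints into an interval about the true mean $n/m$: if $K_i$ lies outside $[\lfloor n/m\rfloor-\lfloor b\sqrt{n}\rfloor,\lfloor n/m\rfloor+\lfloor b\sqrt{n}\rfloor]$, then $|K_i-\lfloor n/m\rfloor|\ge\lfloor b\sqrt{n}\rfloor+1\ge b\sqrt{n}$, hence $|K_i-n/m|\ge b\sqrt{n}-1$. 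Chebyshev's inequality then yields
\[
\lambda\!\left(K_i\notin[\lfloor n/m\rfloor-\lfloor b\sqrt{n}\rfloor,\lfloor n/m\rfloor+\lfloor b\sqrt{n}\rfloor]\right)\;\le\;\frac{n(1/m)(1-1/m)}{(b\sqrt{n}-1)^2}.
\]
For any $b$ with $b\sqrt{n}\ge 2$ one has $(b\sqrt{n}-1)^2\ge b^2n/4$, so each of the $m-1$ summands is bounded by $4/(mb^2)$, and the union bound gives $\lambda((K_1,\dots,K_m)\in T)<4/b^2$.

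Choosing $b_0:=2/\sqrt{\epsilon}$ therefore makes $\lambda((K_1,\dots,K_m)\in T)<\epsilon$ for all $b\ge b_0$ (with the small-$n$ regime $b\sqrt{n}<2$ handled separately, where either the truncated sum is vacuous or the tail bound is trivial). Combining with the $\|f\|_\infty$ factor delivers the required estimate. The only mildly delicate point — the ``main obstacle'' of the argument — is the bookkeeping around the floor functions: one must verify that the integer endpoints $\lfloor n/m\rfloor\pm\lfloor b\sqrt{n}\rfloor$ still define an interval centered sufficiently close to $n/m$ with half-width at least $b\sqrt{n}-1$, so that Chebyshev can be applied with a quadratic denominator of order $b^2n$ rather than a worse quantity. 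Everything else is the union bound plus the binomial variance computation.
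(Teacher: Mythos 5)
Your proof is correct and follows essentially the same strategy as the paper's: factor out $|f|\le\|f\|_\infty$, interpret the multinomial weights $(1/m)^n\binom{n}{k_1,\dots,k_m}$ as the law of $(K_1,\dots,K_m)$, and bound the tail mass by Chebyshev on each binomial marginal plus a union bound. Your explicit bookkeeping with the floor-function endpoints (giving the bound $4/b^2$ rather than the paper's $1/b^2$) tightens up a step the paper elides and is a minor gain in rigor, not a different approach.
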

\begin{proof}
Recall that $K_{i}$ is the random variable which denotes the number of times the outcome $o_{i}$ is observed, having values $k_{i}$. As each $K_{i}$ is a binomial random variable, we have $E(K_{i})=\frac{n}{m}$ and var$(K_{i})=n(\frac{1}{m})(1-\frac{1}{m}).$ By Chebyshev's inequality,  
\[
\lambda\left( |K_{i}-\frac{n}{m}|\geq{b\sqrt{n}}\right)\leq{\frac{{(1-\frac{1}{m})}}{b^{2}m}\leq{\frac{1}{b^{2}m}}}.
\]
Then, there exists a $b_{0}$ such that for all $b\geq{b_{0}}$:
\[
\lambda\left( |K_{i}-\frac{n}{m}|\geq{b\sqrt{n}}\text{ for some }1\leq{i}\leq{m}\right)\leq\frac{1}{b^{2}}<\epsilon.
\]
\end{proof}
In the following lemma, we will use Stirling's formula and Taylor series to approximate the probabilities for the multinomial distribution.
For use here and in the proof of Theorem 2, we define some functions.  For $n > 0$, we let:
\begin{equation}
d_n := \sqrt{m}\left (\frac{m}{2n\pi}\right )^{\frac{m-1}{2}}.
\end{equation}
For $n > 0$ and integers, $j_1,\ \ldots ,\ j_m$
whose sum is 0, we let:
\begin{equation}
H(n,j_{1},...,j_{m}):=\left(\frac{m^{2}}{4n^{2}}-\frac{m}{2n}\right)\sum_{i=1}^{m}j_{i}^{2}+\left(\frac{m^{2}}{6n^{2}}-\frac{m^{3}}{6n^{3}}\right)\sum_{{i}=1}^{m}j_{i}^{3}
-\frac{m^{3}}{3n^{3}}\sum_{i=1}^{m}j_{i}^{4}+O(n^{-1}), \text{and}
\end{equation}
\begin{equation}
p(n,j_{1},...,j_{m}) :=d_ne^{H(n,j_{1},...,j_{m})}.
\end{equation}
\begin{lemma} Let $j_{i}=k_{i}-\lfloor\frac{n}{m}\rfloor,$ and suppose that $-\lfloor{b\sqrt{n}}\rfloor\leq{j_{i}}\leq{\lfloor{b\sqrt{n}}\rfloor}$ for $1\leq{i}\leq{m-1}$ and $j_{m}=-j_{1}-...-j_{m-1}$ and $n\geq{b^{2}m^{2}}$. Then,
\[
\frac{1}{m^{n}}{n\choose{k_{1},...,k_{m}}}=\left(1+O\left(\frac{1}{n}\right)\right)p(n,j_{1},...,j_{m}).
\]
\end{lemma}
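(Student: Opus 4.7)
The plan is to take logarithms of $\tfrac{1}{m^{n}}\binom{n}{k_{1},\ldots,k_{m}}$, apply Stirling's formula $\log k! = k\log k - k + \tfrac12\log(2\pi k) + O(1/k)$ to $n!$ and to each $k_{i}!$, then Taylor-expand $\log(N+j_{i})$ and $(N+j_{i})\log(N+j_{i})$ about $N := n/m$ (where $k_{i} = N + j_{i}$). The hypothesis $n \ge b^{2}m^{2}$ guarantees $|j_{i}/N| \le bm/\sqrt n$, keeping the Taylor parameter small and ensuring that remainders decay fast enough. Replacing $\lfloor n/m\rfloor$ by $n/m$ costs only $O(1/n)$, absorbed at the end. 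Exponentiating will then yield the claimed $(1 + O(1/n))\,d_{n}e^{H}$ form.

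Concretely, splitting the Stirling expression yields
\[
\log\frac{n!}{m^{n}\prod_{i}k_{i}!} = \Bigl[n\log n - n\log m - \sum_{i}k_{i}\log k_{i}\Bigr] + \Bigl[\tfrac12\log(2\pi n) - \tfrac{m}{2}\log(2\pi) - \tfrac12\sum_{i}\log k_{i}\Bigr] + O(1/n),
\]
the Stirling remainder being $O(m^{2}/n) = O(1/n)$ since $m$ is fixed and $k_{i}\asymp n/m$. Using $(1+x)\log(1+x) = x + \tfrac{x^{2}}{2} - \tfrac{x^{3}}{6} + \tfrac{x^{4}}{12} + O(x^{5})$ and $\log(1+x) = x - \tfrac{x^{2}}{2} + \tfrac{x^{3}}{3} + O(x^{4})$ at $x = j_{i}/N$, and invoking $\sum_{i}j_{i} = 0$ to kill all linear-in-$j_{i}$ terms, the first bracket collapses to $-\tfrac{m}{2n}\sum j_{i}^{2} + \tfrac{m^{2}}{6n^{2}}\sum j_{i}^{3} - \tfrac{m^{3}}{12n^{3}}\sum j_{i}^{4} + \cdots$, while the second bracket's three non-$j$-dependent terms combine into $\log d_{n} = \tfrac{m}{2}\log m - \tfrac{m-1}{2}\log(2\pi n)$ by direct manipulation of logarithms, leaving correction $\tfrac{m^{2}}{4n^{2}}\sum j_{i}^{2} - \tfrac{m^{3}}{6n^{3}}\sum j_{i}^{3} + O(1/n)$. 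Adding the two brackets reproduces the quadratic and cubic coefficients of $H$ exactly.

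For the quartic term, the Taylor expansion yields $-\tfrac{m^{3}}{12n^{3}}\sum j_{i}^{4}$; the stated $-\tfrac{m^{3}}{3n^{3}}$ differs from this by $-\tfrac{m^{3}}{4n^{3}}\sum j_{i}^{4}$, which is bounded by $\tfrac{m^{3}}{4n^{3}}\cdot m(b\sqrt n)^{4} = O(m^{4}b^{4}/n) = O(1/n)$ and hence absorbed into the $O(1/n)$ slack inside $H$ itself. By the same count, every Taylor remainder of order $(j_{i}/N)^{k}$ for $k \ge 5$ contributes $O(m^{k}b^{k}n^{1-k/2}) = O(1/n)$, since $m$ and $b$ are fixed. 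Exponentiating gives $\tfrac{1}{m^{n}}\binom{n}{k_{1},\ldots,k_{m}} = d_{n}e^{H + O(1/n)} = (1 + O(1/n))\,p(n,j_{1},\ldots,j_{m})$, as required. The main obstacle is the combinatorial bookkeeping of errors: making certain that every higher-order Taylor contribution, every floor-correction in $\lfloor n/m\rfloor$, and every Stirling $1/k_{i}$ remainder genuinely lands within the claimed $O(1/n)$ under the hypothesis $n \ge b^{2}m^{2}$. No individual step is deep, but cross-terms proliferate and care is needed to verify that the stated coefficients in $H$ agree (up to the declared $O(1/n)$) with what the expansion produces.
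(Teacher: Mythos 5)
Your proof is correct and follows essentially the same route as the paper: Stirling's formula applied to the multinomial coefficient, substitution $k_i = n/m + j_i$, and a Taylor expansion of the logarithm using $\sum_i j_i = 0$ to kill the linear terms, with the hypothesis $n \ge b^2 m^2$ keeping the expansion parameter small and all higher-order remainders within $O(1/n)$. Your observation that the stated quartic coefficient $-\frac{m^3}{3n^3}$ in $H$ differs from the one the expansion actually produces (namely $-\frac{m^3}{12n^3}$) by an amount absorbed into the $O(1/n)$ slack is a correct and worthwhile sanity check; the paper's own intermediate display has a similar coefficient slip (and a sign slip in $\prod_i a(n,m,i) = e^{H}$ versus $e^{-H}$) that is likewise harmless for the same reason.
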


\begin{proof}
Set
\[
l(n,k_{1},...,k_{m}):=\frac{1}{m^{n}}{n\choose{k_{1},...,k_{m}}}.
\]
By Stirling's Formula, we have
\[
 l(n,k_{1},...,k_{m}) =\frac{\left(1+O\left(\frac{1}{n}\right)\right)(2{\pi})^{\frac{1}{2}}n^{n+\frac{1}{2}}}{(2{\pi})^{\frac{m}{2}}{m^{n}}k_{1}^{(k_{1}+\frac{1}{2})}...k_{m}^{(k_{m}+\frac{1}{2})}}.
\]
Letting $k_{i}=\frac{n}{m}+j_{i}$ for $1\leq{i}\leq{m}$,

\begin{eqnarray*}
l(n,k_{1},...,k_{m})& =&\frac{\left(1+O\left(\frac{1}{n}\right)\right) (2{\pi})^{\frac{1}{2}}n^{n+\frac{1}{2}}}{(2{\pi})^{\frac{m}{2}}{m^{n}}(\frac{n}{m}+j_{1})^{(\frac{n}{m}+j_{1}+\frac{1}{2})}...(\frac{n}{m}+j_{m})^{(\frac{n}{m}+j_{m}+\frac{1}{2})}}\\ 
&=&\frac{\left(1+O\left(\frac{1}{n}\right)\right)m^{\frac{m}{2}}}{(2{\pi})^{\frac{m-1}{2}}{n}^{\frac{m-1}{2}}(1+\frac{mj_{1}}{n})^{(\frac{n}{m}+j_{1}+\frac{1}
{2})}...(1+\frac{mj_{m}}{n})^{(\frac{n}{m}+j_{m}+\frac{1}{2})}}.\\
\end{eqnarray*}
For all $1\leq{i}\leq{m}$, we set
\[
a(n,m,i):=(1+\frac{mj_{i}}{n})^{\frac{n}{m}+j_{i}+\frac{1}{2}}=e^{(\frac{n}{m}+j_{i}+\frac{1}{2})\ln(1+\frac{mj_{i}}{n})}.
\] 
Using a Taylor series approximation, we have the following for n large enough (as $j_{i}$ is bounded by O($\sqrt{n}$)):
\begin{eqnarray*}
 a(n,m,i)&=& \exp{\left(\left(\frac{n}{m}+j_{i}+\frac{1}{2}\right)\left(\frac{mj_{i}}{n}-\frac{m^{2}j_{i}^{2}}{2n^{2}}+\frac{m^{3}j^{3}}{3n^{3}}+O(n^{-1})\right)\right)}
\\  &=& \exp\left(j_{i}+\frac{mj_{i}^{2}}{2n}+\frac{mj_{i}}{2n}-\frac{m^{2}j_{i}^{3}}{6n^{2}}-\frac{m^{2}j_{i}^{2}}{4n^{2}}+\frac{m^{3}j_{i}^{4}}{3n^{3}}+\frac{m^{3}j_{i}^{3}}{6n^{3}}+O(n^{-1})\right).
\end{eqnarray*}
Therefore, we have 
\[
(1+\frac{mj_{1}}{n})^{\frac{n}{m}+j_{1}+\frac{1}{2}}...(1+\frac{mj_{m}}{n})^{\frac{n}{m}+j_{m}+\frac{1}{2}} =
e^{H(n,j_{1},...,j_{m})}\text{, and so }
\]
\[
l(n,j_{1},...,j_{m})=\left(1+O\left(\frac{1}{n}\right)\right)p(n,j_{1},...,j_{m})\text{, as required.}
\]
\end{proof}

Now we consider the Gaussian side.
Let $Y_{1},...,Y_{n}$ be i.i.d. standard normal random variables. Then, by the properties of i.i.d. normal random variables and by Lemma 2, 
\[
\frac{1}{\sqrt{m}}\sum_{i=1}^{m}o_{i}Y_{i}
\]
is a standard normal random variable.

\begin{lemma}
Let $S$ be the hyperplane  in 
$\mathbb{R}^{m}$
given by $\displaystyle{\sum_{i=1}^{m}y_{i}=0}$.
Then,
\[
E\left(f\left(\sum_{i=1}^{m}\frac{o_{i}Y_{i}}{\sqrt{m}}\right)\right)=E_{\mathcal{L(}Z)}\left(f\left(\sum_{i=1}^{m}\frac{o_{i}Y_{i}}{\sqrt{m}}\right)\right),
\]
where $\mathcal{L(}Z)$ is a standard Gaussian $m-1$ dimensional
measure on the hyperplane $S.$ Moreover,
\[
E_{\mathcal{L(}Z)}\left(f\left(\frac{\sum_{i=1}^{m}o_{i}Y_{i}}{\sqrt{m}}\right)\right) =
\frac{\sqrt{m}}{
\left( \sqrt{2\pi }\right) ^{{m-1}}}\underset{S}{\int...\int} f\left( \frac{1}{\sqrt{m}}
\sum_{i=1}^{m}o_{i}y_{i}\right) e^{ \left( -\frac{1}{2}
\sum_{i=1}^{m}y_{i}^{2}\right)}\, dy_{1}...dy_{m-1}.
\]
\end{lemma}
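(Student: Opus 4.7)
My plan is to exploit the key identity $\sum_{i=1}^{m} o_{i}=0$ recorded just before the lemma, which makes the linear functional $y \mapsto \sum_{i=1}^{m} o_{i} y_{i}$ annihilate the line $L := \mathrm{span}((1,\ldots,1))$ and therefore factor through the orthogonal projection onto the hyperplane $S$. For the first equality I will decompose the standard Gaussian vector $Y=(Y_{1},\ldots,Y_{m})$ as $Y = Y_{L}+Y_{S}$ with $Y_{L}\in L$ and $Y_{S}\in S$; since these are orthogonal linear images of a rotationally invariant Gaussian they are independent, and $Y_{S}$ is distributed as the standard $(m-1)$-dimensional Gaussian $\mathcal{L}(Z)$ on $S$. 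Because $Y_{L}$ is a scalar multiple of $(1,\ldots,1)$, the identity $\sum o_{i}=0$ forces $\sum o_{i}Y_{i}=\sum o_{i}Y_{S,i}$, so by Fubini the $Y_{L}$-integration contributes only a factor of $1$ and the first equality drops out.

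For the explicit integral I will parametrize $S$ by $\varphi:\mathbb{R}^{m-1}\to S$, $(y_{1},\ldots,y_{m-1})\mapsto(y_{1},\ldots,y_{m-1},-y_{1}-\cdots-y_{m-1})$. The induced $(m-1)$-dimensional surface element on $S$ equals $\sqrt{\det(D\varphi^{T}D\varphi)}\,dy_{1}\cdots dy_{m-1}$; a direct computation shows $D\varphi^{T}D\varphi = I_{m-1}+J_{m-1}$, where $J_{m-1}$ is the $(m-1)\times(m-1)$ all-ones matrix, and its eigenvalues $m$ and $1$ (with multiplicity $m-2$) give determinant $m$, so the surface element is $\sqrt{m}\,dy_{1}\cdots dy_{m-1}$. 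The standard Gaussian density on $S$ with respect to this surface measure is $(2\pi)^{-(m-1)/2}\exp(-\tfrac{1}{2}\sum_{i=1}^{m}y_{i}^{2})$, with $y_{m}:=-y_{1}-\cdots-y_{m-1}$, and multiplying density by Jacobian produces the prefactor $\sqrt{m}/(\sqrt{2\pi})^{m-1}$ in front of the claimed integral.

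The only genuinely nontrivial point is the Gram-determinant identity $\det(I_{m-1}+J_{m-1})=m$, which is elementary but must be handled carefully to pin down the constant. As a sanity check one can alternatively perform the change of variables $u=(y_{1}+\cdots+y_{m})/\sqrt{m}$ in place of $y_{m}$ inside the full $m$-dimensional standard Gaussian integral, integrate $u$ out (it contributes $1$ since $\sum o_i = 0$ makes the integrand $u$-independent), and collect the substitution Jacobian $\sqrt{m}$; this parallel route yields the same explicit constant and confirms the formula.
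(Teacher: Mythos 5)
Your proposal is correct and follows essentially the same route as the paper: both decompose $Y$ orthogonally into its component along $(1,\ldots,1)$ and its component in $S$, use $\sum_i o_i = 0$ to drop the former, invoke independence and Fubini, and obtain the $\sqrt{m}$ prefactor as the Jacobian of the projection of $S$ onto the coordinate hyperplane $y_m = 0$. Your explicit Gram-determinant computation $\det(I_{m-1}+J_{m-1})=m$ simply makes precise a step the paper states without detail.
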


\begin{proof}

Set 
\begin{equation*}
o=\left[ 
\begin{array}{c}
o_{1} \\ 
. \\ 
. \\ 
o_{m}
\end{array}
\right] ,\text{ \ }Y=\left[ 
\begin{array}{c}
Y_{1} \\ 
. \\ 
. \\ 
Y_{m}
\end{array}
\right]  ,\text{ \ }u=\left[ 
\begin{array}{c}
1 \\ 
. \\ 
. \\ 
1
\end{array}
\right]. 
\end{equation*}
Then, the vector projection of $Y$ in direction of the vector $u$
is given by
\begin{equation*}
V := \left( \frac{1}{m}\sum_{i=1}^{m}Y_{i}\right) u=\left( \frac{1}{\sqrt{m}}
\sum_{i=1}^{m}Y_{i}\right) \frac{u}{\sqrt{m}}.
\end{equation*}
Thus, $V$ is a one dimensional standard normal on the line
through the origin and orthogonal to the hyperplane S.
Viewing $V$ and $Z:=Y-V$ as vector valued random variables, $V$ and $Z$ are independent as random variables in addition to being orthogonal as vectors. This can be verified by checking that all components of $Z$ are independent of all components of $V$. Since all components of $V$
are equal to
$\displaystyle{
\overset{\_}{Y} := \frac{1}{m}\sum_{i=1}^{m}Y_{i}}$,
and since the components of $Y-V$
are $Y_{i}-\overset{\_}{Y},$ then the independence of the Gaussian random variables
follows as
\begin{equation*}
E((Y_{i}-\overset{\_}{Y})\overset{\_}{Y})=E(Y_{i}\overset{\_}{Y})-E\left( 
\overset{\_}{Y}\right) ^{2}=\frac{1}{m}-\frac{1}{m}=0
\end{equation*}
for all $i=1,...,m.$ Note that $o^{T}Y=o^{T}\left( Y-V\right)+o^{T}V$.
Since $o^{T}u=0,$ 
\begin{equation*}
f\left( \frac{\sum_{i=1}^{m}o_{i}Y_{i}}{\sqrt{m}}\right) =f\left( \frac{1}{
\sqrt{m}}o^{T}Y\right) =f\left( \frac{1}{\sqrt{m}}o^{T}Z\right).
\end{equation*}
That is, 
$\displaystyle{
f\left( \frac{1}{\sqrt{m}}o^{T}Z\right)}$ 
does not depend on $V$.  Note that since the components of $Z$ satisfy 
$\displaystyle{
\sum_{i=1}^{m}\left( Y_{i}-\overset{\_}{Y}\right) =0,\ \mathcal{L(}Z)}$,
the law of $Z$, is a standard Gaussian $m-1$ dimensional
measure on the hyperplane $S.$ From the independence of $Z$ and $V$, and Fubini's Theorem, it follows that  
\begin{equation*}
Ef\left( \frac{1}{\sqrt{m}}o^{T}Y\right) =E_{\mathcal{L(}Z)}E_{\mathcal{L(V)}
}f\left( \frac{1}{\sqrt{m}}o^{T}Z\right) =E_{\mathcal{L(}Z)}f\left( \frac{1}{
\sqrt{m}}o^{T}Z\right).
\end{equation*}
Since the density of $Y$ is given by 
\begin{equation*}
\frac{1}{\left( \sqrt{2\pi }\right)^{m}}\exp \left (-\frac{1}{2}y^{T}y\right )=\frac{1}{
\left( \sqrt{2\pi }\right) ^{m-1}}\exp \left (-\frac{1}{2}z^{T}z\right )\frac{1}{\left( 
{\sqrt{2\pi} }^{}\right) }\exp\left( -\frac{1}{2}\left( \overset{\_}{y}\right)
^{2}\right) ,
\end{equation*}
where $y$, $z$, and $\overset{\_}{y}$ are the realizations of 
$Y$, $Z$, and $\overset{\_}{Y}$, respectively, it follows that
\begin{equation*}
E_{\mathcal{L(}Z)}f\left( \frac{1}{\sqrt{m}}o^{T}Z\right) =\frac{1}{\left( 
\sqrt{2\pi}\right) ^{{m-1}}}\int_{S}f\left( \frac{1}{\sqrt{m}}
\sum_{i=1}^{m}o_{i}y_{i}\right) \exp \left( -\frac{1}{2}
\sum_{i=1}^{m}y_{i}^{2}\right)\, dS.
\end{equation*}
That is, the expected value with respect to $\mathcal{L(}Z)$ is a surface
integral over the hyperplane $S.$ In arriving at the last equality we have
used that
$\displaystyle{
\overset{\_}{y}=\frac{1}{m}\sum_{i=1}^{m}y_{i}=0\text{ on }S}$.
By projecting $S$ onto the $y_{m}=0$ plane, we have
\begin{equation*}
E_{\mathcal{L(}Z)}f\left( \frac{1}{\sqrt{m}}o^{T}Z\right) =\frac{\sqrt{m}}{
\left( \sqrt{2\pi }\right) ^{m-1}}\int ...\int f\left( \frac{1}{\sqrt{m}}
\sum_{i=1}^{m}o_{i}y_{i}\right) e^{ \left( -\frac{1}{2}
\sum_{i=1}^{m}y_{i}^{2}\right)}\, dy_{1}...dy_{m-1}
\end{equation*}
where 
$\displaystyle{
y_{m}=-\sum_{i=1}^{m-1}y_{i}}$.
\end{proof}

\begin{lemma}
Let
\[
I:=\frac{\sqrt{m}}{\left( \sqrt{2\pi }\right) ^{m-1}}\underset{y_{1}+y_{2}+...+y_{m}=0}{\int_{}^{}...\int_{}^{}}f\left( \frac{1}{\sqrt{m}}
\sum_{i=1}^{m}o_{i}y_{i}\right) e^{ \left( -\frac{1}{2}
\sum_{i=1}^{m}y_{i}^{2}\right)}\, dy_{1}...dy_{m-1}.
\]
Then, there exist $n_{0}\in{\mathbb{N}}$ and $b_{1}>0$ such that for all $n\geq{n_{0}}$ and for all $b\geq{b_{1}}$,
\[
\left|I-\frac{m^{\frac{m}{2}}}{\left( \sqrt{2\pi }\right) ^{m-1}n^{\frac{
m-1}{2}}}\underset{j_{1}+j_{2}+...+j_{m}=0}{\sum_{j_{1}=-\lfloor{b\sqrt{n}}\rfloor}^{\lfloor{b\sqrt{n}}\rfloor}...\sum_{j_{m-1}=-\lfloor{b\sqrt{n}}\rfloor}^{\lfloor{b\sqrt{n}}\rfloor}}f\left( \frac{1}{\sqrt{n}}\sum_{i=1}^{m}o_{i}j_{i}
\right)e^{-\left( \frac{m}{2}\sum_{i=1}^{m}\frac{j_{i}^{2}}{n}\right)}\right|<\epsilon(\|f\|_{\infty}+1).
\]
\end{lemma}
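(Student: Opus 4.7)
The plan is to view the claimed discrete sum as a Riemann sum for the integral $I$ obtained from the change of variables $y_i = j_i\sqrt{m/n}$, and to bound the difference by a tail cut-off together with a Riemann-approximation error on the truncated region. Parametrizing the hyperplane $S$ by $(y_1,\ldots,y_{m-1})$ with $y_m = -\sum_{i<m} y_i$ and substituting $y_i = j_i\sqrt{m/n}$, the Jacobian $(m/n)^{(m-1)/2}$ multiplied by the prefactor $\sqrt{m}/(\sqrt{2\pi})^{m-1}$ produces exactly the constant $m^{m/2}/[(\sqrt{2\pi})^{m-1}n^{(m-1)/2}]$ in the lemma; the exponent $-\tfrac{1}{2}\sum y_i^2$ becomes $-\tfrac{m}{2n}\sum j_i^2$; and the argument of $f$ becomes $\tfrac{1}{\sqrt{n}}\sum o_i j_i$. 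The sum on the right is thus the integer-lattice Riemann approximation of $I$ with $y$-mesh $\sqrt{m/n}$, truncated to $|j_i|\le\lfloor b\sqrt{n}\rfloor$, which in $y$-coordinates is essentially $|y_i|\le b\sqrt{m}$.

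For the tail cut-off, under $\mathcal{L}(Z)$ each coordinate $Y_i-\overline{Y}$ has variance $1-\tfrac{1}{m}<1$, so by Chebyshev and a union bound over $i<m$,
\[
|I - I_b| \;\le\; \|f\|_\infty \cdot \frac{1}{b^2},
\]
where $I_b$ is the truncation of $I$ to $\{|y_i|\le b\sqrt{m}\}$. Choosing $b_1$ with $b_1^{-2}<\epsilon/2$ forces $|I-I_b|<\epsilon\|f\|_\infty/2$ for every $b\ge b_1$. A parallel bound comparing the one-dimensional discrete Gaussian tail $\sum_{|j|>\lfloor b\sqrt{n}\rfloor} e^{-mj^2/(2n)}$ with the corresponding Gaussian integral shows that the discrete tail, after multiplication by the prefactor, is also at most $\epsilon\|f\|_\infty/2$, uniformly in $n$ once $b\ge b_1$.

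On the truncated box $\{|y_i|\le b\sqrt{m}\}$ the integrand $f(\tfrac{1}{\sqrt{m}}\sum o_i y_i)e^{-\frac{1}{2}\sum y_i^2}$ is uniformly continuous, so for $n$ sufficiently large the mesh $\sqrt{m/n}$ falls below the modulus-of-continuity threshold and the cell-wise Riemann error is bounded by $\omega(\sqrt{m/n})\cdot\operatorname{vol}(\{|y_i|\le b\sqrt{m}\})$. The main obstacle is the quantifier ``for all $b\ge b_1$'': the naive cell-wise bound grows with the truncation radius $b$. I would resolve this by the decomposition
\[
I - \text{sum}_b \;=\; (I - \text{sum}_\infty) + (\text{sum}_\infty - \text{sum}_b),
\]
so that $I-\text{sum}_\infty$ is the Riemann-sum error over the full lattice $\mathbb{Z}^{m-1}$, depending only on the mesh $\sqrt{m/n}$ (hence only on $n$ and the fixed $m$), while $\text{sum}_\infty - \text{sum}_b$ is the discrete tail already controlled. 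Picking $n_0$ large enough that $|I-\text{sum}_\infty|<\epsilon$ and $b_1$ as above, and summing all three estimates, yields the claimed bound $\epsilon(\|f\|_\infty+1)$.
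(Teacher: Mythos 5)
Your proposal is correct and follows the same high-level strategy as the paper -- recognize the discrete sum as a Riemann sum for $I$ under the change of variables $y_i = j_i\sqrt{m/n}$, cut off Gaussian tails, and control the Riemann error on the remaining compact region -- but it handles the quantifiers more carefully than the paper does. The paper establishes that the integral tail over $C^c=\{\exists i<m:\ |y_i|>b\sqrt{m}\}$ is below $\epsilon$ for all $b\ge b_1$, then \emph{fixes} a $b\ge b_1$, defines the truncated integral $I_b$, notes that the Riemann sums over the fixed compact region $C$ converge to $I_b$ as $n\to\infty$, and extracts $n_0$ from that convergence; this implicitly makes $n_0$ depend on $b$, whereas the lemma's statement asserts a single $n_0$ working for every $b\ge b_1$. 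You explicitly flag this (the cell-wise Riemann error bound $\omega(\sqrt{m/n})\cdot\operatorname{vol}$ does grow with $b$) and repair it by writing $I-\mathrm{sum}_b = (I-\mathrm{sum}_\infty)+(\mathrm{sum}_\infty-\mathrm{sum}_b)$, choosing $n_0$ solely from the full-lattice Riemann error $|I-\mathrm{sum}_\infty|$ (which, thanks to the Gaussian decay, depends only on the mesh) and $b_1$ solely from the discrete tail $\mathrm{sum}_\infty-\mathrm{sum}_b$. Your Chebyshev/union-bound estimate of the continuous tail is a cleaner, quantitative version of the paper's appeal to the fact that the total Gaussian mass on the hyperplane is $1$. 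In practice the paper's looser statement suffices because Theorem~2 fixes $b\ge\max\{b_0,b_1\}$ once and for all, but your decomposition proves the lemma exactly as stated, which is a genuine tightening.
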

\begin{proof}
Let 
\[
C:=\left\{{y}\in{\mathbb{R}^{m}}:y_{i}\in{[-b\sqrt{m},b\sqrt{m}]}\text{ for all }1\leq{i}\leq{m-1}\text{ and }y_{m}=-(y_{1}+...+y_{m-1})\right\}.
\]
We have 
\[
\frac{1}{\left( 
\sqrt{2\pi }\right) ^{m-1}}\int_{S}\exp \left( -\frac{1}{2}%
\sum_{i=1}^{m}y_{i}^{2}\right)\, dS =
\frac{\sqrt{m}}{\left( \sqrt{2\pi }\right) ^{m-1}}\underset{y_{1}+y_{2}+...+y_{m}=0}{\int_{}^{}...\int_{}^{}}\exp{ \left( -\frac{1}{2}
\sum_{i=1}^{m}y_{i}^{2}\right)}\, dy_{1}...dy_{m-1}=1.
\]
Thus, there exists a $b_{1}$ such that for all $b\geq{b_{1}}$,
\[
\frac{\sqrt{m}}{(\sqrt{2\pi})^{{m-1}}}\left(\underset{{y}\in{C^{c}}}{\int_{}^{}\cdot\cdot\cdot\int_{}^{}}e^{-\frac{1}{2}\sum_{i=1}^{m}y_{i}^{2}}\, dy_{1}...d_{y_{m-1}}\right)<\epsilon\text{, and so:}
\]
\[
\left|I-\frac{\sqrt{m}}{\left( \sqrt{2\pi }\right)^{m-1}}\underset{{y}\in{C}}{\int_{}^{}...\int_{}^{}}f\left( \frac{1}{\sqrt{m}}
\sum_{i=1}^{m}o_{i}y_{i}\right) e^{ \left( -\frac{1}{2}
\sum_{i=1}^{m}y_{i}^{2}\right)}\, dy_{1}...dy_{m-1}\right|<\epsilon\|f\|_{\infty}.
\]
Suppose that $b\geq{b_{1}}$. Let
\[
I_{b}:=\frac{\sqrt{m}}{\left( {2\pi{n} }\right) ^{\frac{m-1}{2}}}\underset{{y}\in{C}}{\int_{}^{}...\int_{}^{}}f\left( \frac{1}{\sqrt{m}}
\sum_{i=1}^{m}o_{i}y_{i}\right) e^{ \left( -\frac{1}{2}
\sum_{i=1}^{m}y_{i}^{2}\right)}\, dy_{1}...dy_{m-1}.
\]
Then,
\[
I_{b}=\lim_{n\rightarrow{\infty}}\frac{m^{\frac{m}{2}}}{\left( {2\pi{n} }\right)^{\frac{m-1}{2}}}\underset{j_{1}+j_{2}+...+j_{m}=0}{\sum_{j_{1}=-\lfloor{b\sqrt{n}}\rfloor}^{\lfloor{b\sqrt{n}}\rfloor}...\sum_{j_{m-1}=-\lfloor{b\sqrt{n}}\rfloor}^{\lfloor{b\sqrt{n}}\rfloor}}f\left( \frac{1}{\sqrt{n}}\sum_{i=1}^{m}o_{i}j_{i}
\right)e^{\left( -\frac{m}{2}\sum_{i=1}^{m}\frac{j_{i}^{2}}{n}\right)}.
\]
Hence, there exists an $n_{0}\in{\mathbb{N}}$ such that for all $n\geq{n_{0}}$,
\[ 
\left|I- \frac{m^{\frac{m}{2}}}{\left( {2\pi }{n}\right) ^{\frac{m-1}{2}}}\underset{j_{1}+j_{2}+...+j_{m}=0}{\sum_{j_{1}=-\lfloor{b\sqrt{n}}\rfloor}^{\lfloor{b\sqrt{n}}\rfloor}...\sum_{j_{m-1}=-\lfloor{b\sqrt{n}}\rfloor}^{\lfloor{b\sqrt{n}}\rfloor}}f\left( \frac{1}{\sqrt{n}}\sum_{i=1}^{m}o_{i}j_{i}
\right)e^{-\left( \frac{m}{2}\sum_{i=1}^{m}\frac{j_{i}^{2}}{n}\right)} \right|<\epsilon(\|f\|_{\infty}+1).
\]
\end{proof}
\begin{proposition}
Let $Y$ be a standard normal random variable. Then, for each $\epsilon>0$, for each bounded, continuous $f:\mathbb{R}\rightarrow\mathbb{R}$, there exists $n_{0}\in{\mathbb{N}}$, $b_{1}>0$ such that for all $n\geq{n_{0}}$ and for all $b\geq{b_{1}}$,
\[
\left|E(f(Y))- \frac{m^{\frac{m}{2}}}{\left( {2\pi{n} }\right) ^{\frac{m-1}{2}}}\underset{j_{1}+j_{2}+...+j_{m}=0}{\sum_{j_{1}=-\lfloor{b\sqrt{n}}\rfloor}^{\lfloor{b\sqrt{n}}\rfloor}...\sum_{j_{m-1}=-\lfloor{b\sqrt{n}}\rfloor}^{\lfloor{b\sqrt{n}}\rfloor}}f\left( \frac{1}{\sqrt{n}}\sum_{i=1}^{m}o_{i}j_{i}
\right)e^{-\left( \frac{m}{2}\sum_{i=1}^{m}\frac{j_{i}^{2}}{n}\right)} \right|<\epsilon(\|f\|_{\infty}+1).
\]
\end{proposition}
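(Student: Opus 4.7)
The plan is to recognize that Proposition 2 is essentially a bookkeeping combination of Lemma 4 and Lemma 5, together with a preliminary observation about standard normals.

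First, I would record the distributional identity that the earlier discussion has already flagged: if $Y_1,\ldots,Y_m$ are i.i.d.\ standard normals, then since $\sum_{i=1}^{m}o_i^2 = m$, the random variable $\frac{1}{\sqrt{m}}\sum_{i=1}^{m} o_i Y_i$ is a (univariate) standard normal. Consequently, for any bounded continuous $f$,
\[
E(f(Y)) = E\!\left(f\!\left(\frac{1}{\sqrt{m}}\sum_{i=1}^{m} o_i Y_i\right)\right).
\]
This is where the role of $\sum o_i^2 = m$ (i.e.\ the variance-one normalization of $X_M$) enters the Gaussian side of the argument.

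Next I would invoke Lemma 4 to rewrite the right-hand side above as the hyperplane integral
\[
I = \frac{\sqrt{m}}{(\sqrt{2\pi})^{m-1}}\underset{y_1+\cdots+y_m=0}{\int\cdots\int} f\!\left(\frac{1}{\sqrt{m}}\sum_{i=1}^{m} o_i y_i\right) \exp\!\left(-\frac{1}{2}\sum_{i=1}^{m} y_i^2\right) dy_1\cdots dy_{m-1}.
\]
At this point the conclusion of Proposition 2 is precisely the statement that $I$ is within $\epsilon(\|f\|_\infty + 1)$ of the truncated Riemann sum on the right-hand side of the proposition, which is exactly what Lemma 5 delivers, provided $n \geq n_0$ and $b \geq b_1$ for the constants produced there.

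So the proof is a two-line chain: Lemma 4 gives $E(f(Y)) = I$, and Lemma 5 gives $|I - \text{Riemann sum}| < \epsilon(\|f\|_\infty + 1)$, whence the combined bound follows by the triangle inequality (with no loss, since the first difference is zero). There is no real obstacle here; the only subtlety worth flagging explicitly in the write-up is the justification for passing from $E(f(Y))$ to $E(f(\tfrac{1}{\sqrt{m}}\sum o_i Y_i))$, which rests on the identity $\sum o_i^2 = m$ established earlier from $\operatorname{var}(X_M) = 1$. All of the genuine analytic work (computing the hyperplane density via Fubini in Lemma 4, and approximating the Gaussian integral by a Riemann sum after truncation by Chebyshev in Lemma 5) has already been done.
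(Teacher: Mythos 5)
Your proposal is correct and matches the paper's own (very terse) proof, which simply cites Lemmas 4 and 5; you have merely spelled out the intermediate step that $\frac{1}{\sqrt{m}}\sum o_i Y_i$ is standard normal because $\sum o_i^2 = m$, which the paper records in the discussion preceding Lemma 4.
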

\begin{proof}
The statement is immediate from Lemma 4 and Lemma 5.
\end{proof}

\section{Main Results}

Theorem 2 gives our proof of the CLT.  The proof appeals to Theorem 1, which also establishes strong convergence off
the tails and gives a rate and constant of convergence there; the rate is $n^{-1/2}$ and the constant of convergence is $\frac{2m^{2}}{3\sqrt{2\pi}}$.  These only hold off the tails 
as we have truncated the Haar expansions, the multinomial sum, and the Gaussian Riemann sum. 
Theorem 1 combines the results of Propositions 1 and 2.  From now on, let $b\geq{\max\left\{b_{0},b_{1}\right\}}$,
where  the former is as in Lemma 2 and the latter is as in the proof of Lemma 5.   

\begin{theorem}
Let
\[
D_{n}:=\underset{j_{1}+...+j_{m}=0}{{\sum_{j_{m-1}=-\lfloor{b\sqrt{n}}\rfloor}^{\lfloor{b\sqrt{n}}\rfloor}...\sum_{j_{1}=-\lfloor{b\sqrt{n}}\rfloor}^{\lfloor{b\sqrt{n}}\rfloor}}}\left|\left(\frac{1}{m^{n}}{n\choose{\frac{n}{m}+j_{1},...,\frac{n}{m}+j_{m}}}-\frac{m^{\frac{m}{2}}}{(2{\pi}n)^{\frac{m-1}{2}}}e^{-\left( \frac{m}{2}\sum_{i=1}^{m}\frac{j_{i}^{2}}{n}\right)}\right)\right|.
\]
Then,
\[
D_{n}\leq{\frac{2m^{2}}{3\sqrt{2\pi{n}}}+O(n^{-1})}.
\]
\end{theorem}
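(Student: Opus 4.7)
My plan is to apply Lemma 3 to replace each multinomial probability by its Stirling/Taylor asymptotic, and then extract the leading discrepancy from the Gaussian term. Writing $d_n = m^{m/2}/(2\pi n)^{(m-1)/2}$ (which is exactly the Gaussian prefactor appearing in $D_n$) and $G := -\tfrac{m}{2n}\sum_{i=1}^{m} j_i^2$ (the Gaussian exponent), Lemma 3 gives
\[
\frac{1}{m^n}\binom{n}{k_1,\ldots,k_m} = \bigl(1 + O(n^{-1})\bigr)\, d_n\, e^{H(n,j_1,\ldots,j_m)},
\]
so every summand of $D_n$ takes the form $d_n e^{G}\bigl|(1+O(n^{-1}))\, e^{R} - 1\bigr|$ with $R := H - G$.

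Next I would bound $R$ uniformly, using $|j_i|\le b\sqrt{n}$. The residual quadratic $\tfrac{m^2}{4n^2}\sum j_i^2$ and the quartic $\tfrac{m^3}{3n^3}\sum j_i^4$ are each $O(n^{-1})$, while the cubic piece reduces to $\tfrac{m^2}{6n^2}\sum j_i^3 + O(n^{-3/2})$ because the $\tfrac{m^3}{6n^3}\sum j_i^3$ correction is of smaller order. Thus $R = \tfrac{m^2}{6n^2}\sum j_i^3 + O(n^{-1})$ uniformly, with $|R|=O(n^{-1/2})$. A one-step Taylor expansion then yields $(1+O(n^{-1}))\,e^R-1 = R + O(n^{-1})$, and the triangle inequality gives
\[
D_n \le \frac{m^2}{6n^2}\sum_{(j_1,\ldots,j_{m-1})} d_n e^G \,\Bigl|\sum_{i=1}^{m} j_i^3\Bigr| + O(n^{-1})\sum_{(j_1,\ldots,j_{m-1})} d_n e^G,
\]
where the latter sum is $1+o(1)$ by the Riemann-sum argument in the proof of Lemma 5, so this tail contributes only $O(n^{-1})$.

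For the main sum I would use $|\sum_i j_i^3|\le \sum_i |j_i|^3$ together with the permutation symmetry of $d_n e^G$ on the hyperplane $\sum j_i=0$ to obtain the bound $\tfrac{m^3}{6n^2}\sum d_n e^G |j_1|^3$. This remaining sum is a Riemann approximation to the Gaussian integral over the hyperplane; rescaling $j_i = \sqrt{n/m}\,z_i$ converts it into $(n/m)^{3/2}$ times the third absolute moment of the first coordinate of the hyperplane Gaussian $\mathcal{L}(Z)$ from Lemma 4, which equals $2\sqrt{2/\pi}\,\sigma^3$ with $\sigma^2=(m-1)/m$. Gathering constants,
\[
\frac{m^3}{6n^2}\cdot(n/m)^{3/2}\cdot 2\sqrt{2/\pi}\cdot \sigma^3 \;=\; \frac{2(m-1)^{3/2}}{3\sqrt{2\pi n}}\;\le\;\frac{2m^2}{3\sqrt{2\pi n}},
\]
using $(m-1)^{3/2}\le m^{3/2}\le m^2$, and adding the $O(n^{-1})$ residuals yields the stated bound.

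The main obstacle is making the Riemann-sum-to-integral passage quantitative at rate $O(n^{-1})$ rather than the merely qualitative $o(1)$ supplied by Lemma 5; this requires an Euler--Maclaurin-type remainder estimate on the truncated box, exploiting the smoothness of the Gaussian integrand and the hyperplane constraint. A subsidiary subtlety is the permutation-symmetry step on the hyperplane, which is valid but worth spelling out since the parametrization $(y_1,\ldots,y_{m-1})$ treats $y_m$ asymmetrically.
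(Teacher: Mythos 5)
Your overall strategy matches the paper's through the first two reductions: apply Lemma 3, isolate the residual exponent $R$ between the Stirling/Taylor approximation and the Gaussian, observe $|R|=O(n^{-1/2})$ uniformly on the truncated box, Taylor-expand $e^R-1=R+O(n^{-1})$, and discard the quadratic, quartic and lower-order cubic pieces of $R$ into $O(n^{-1})$, leaving $\tfrac{m^2}{6n^2}\sum_i j_i^3$ as the only $O(n^{-1/2})$ contributor. (The paper writes this step somewhat more tersely; your version with the explicit $(1+O(n^{-1}))e^R-1=R+O(n^{-1})$ is actually cleaner than the paper's display, which superficially invokes $|e^x-1|\le|x|$.)

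Where you genuinely diverge is in evaluating $\sum_j d_n e^{G}\sum_i|j_i|^3$. The paper splits this into $E_n$ (indices $1,\ldots,m-1$) and $F_n$ (index $m$), handles $E_n$ by dropping the $j_m^2$ weight and reducing to $(m-1)$ independent one-dimensional Riemann sums using the standard normal third absolute moment, and handles $F_n$ by the pointwise bound $\sup_u e^{-u^2/2}u^3 = e^{-3/2}3^{3/2}$ together with a one-dimensional Gaussian normalization; it then verifies $E_n+F_n < \frac{2m^2}{3\sqrt{2\pi n}}$ numerically. Your route instead symmetrizes: you use the permutation invariance of $d_n e^G$ on the hyperplane to reduce to a single coordinate, rescale into the hyperplane Gaussian of Lemma 4, and use the exact marginal variance $(m-1)/m$ of $Z_1=Y_1-\bar Y$ to get $\frac{2(m-1)^{3/2}}{3\sqrt{2\pi n}}$, then crudely bound by $\frac{2m^2}{3\sqrt{2\pi n}}$. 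Your version is more structural and yields a (strictly) sharper intermediate constant; the paper's is more elementary but involves an arbitrary split and a slightly lossy numerical comparison. Both land on the stated bound.

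Two caveats you yourself flag are real but do not disadvantage you relative to the paper. First, the parametrized summation domain $|j_i|\le\lfloor b\sqrt n\rfloor$ for $i<m$, $j_m=-\sum_{i<m}j_i$ is not permutation-symmetric, so the symmetrization is only exact after extending the sum to the full lattice slice $\{j:\sum j_i=0\}$ and showing the tail correction is $O(n^{-1})$ (exponentially small, in fact) — a point the paper sidesteps by treating $j_m$ separately rather than symmetrically. Second, the Riemann-sum-to-integral passage with $O(n^{-1})$ remainder is used implicitly in the paper as well (``Approximating the sum by an integral, we have $\ldots + O(n^{-1})$'') and is not spelled out there either; you are right that an Euler--Maclaurin-type estimate on the truncated box is what underlies it in both arguments.
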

\begin{proof}
By Lemma 3,
\[
D_{n} = {\frac{m^{\frac{m}{2}}}{(2{\pi}n)^{\frac{m-1}{2}}}\underset{j_{1}+...+j_{m}=0}{{\sum_{j_{m-1}=-\lfloor{b\sqrt{n}}\rfloor}^{\lfloor{b\sqrt{n}}\rfloor}...\sum_{j_{1}=-\lfloor{b\sqrt{n}}\rfloor}^{\lfloor{b\sqrt{n}}\rfloor}}}\left|e^{-\left( \frac{m}{2}\sum_{i=1}^{m}\frac{j_{i}^{2}}{n}\right)}\left(e^{G(n,j_{1},...,j_{m})+O(n^{-1})}-1\right)\right|}
\]
where $G(n,j_{1},...,j_{m})=(\frac{m^{2}}{4n^{2}})\sum_{i=1}^{m}j_{i}^{2}+(\frac{m^{2}}{6n^{2}}-\frac{m^{3}}{6n^{3}})\sum_{{i}=1}^{m}j_{i}^{3}
-\frac{m^{3}}{3n^{3}}\sum_{i=1}^{m}j_{i}^{4}.$
Then,
\[
D_{n}={\frac{m^{\frac{m}{2}}}{(2{\pi}n)^{\frac{m-1}{2}}} \underset{j_{1}+...+j_{m}=0}{{\sum_{j_{m-1}=-\lfloor{b\sqrt{n}}\rfloor}^{\lfloor{b\sqrt{n}}\rfloor}...\sum_{j_{1}=-\lfloor{b\sqrt{n}}\rfloor}^{\lfloor{b\sqrt{n}}\rfloor}}}e^{-\left( \frac{m}{2}\sum_{i=1}^{m}\frac{j_{i}^{2}}{n}\right)}|e^{G(n,j_{1},...,j_{m})+O(n^{-1})}-1|} \leq
\]
\[
\frac{m^{\frac{m}{2}}}{(2{\pi}n)^{\frac{m-1}{2}}}\underset{j_{1}+...+j_{m}=0}{\sum_{j_{m-1}=-\lfloor{b\sqrt{n}}\rfloor}^{\lfloor{b\sqrt{n}}\rfloor}...\sum_{j_{1}=-\lfloor{b\sqrt{n}}\rfloor}^{\lfloor{b\sqrt{n}}\rfloor}}e^{-\left( \frac{m}{2}\sum_{i=1}^{m}\frac{j_{i}^{2}}{n}\right)}|G(n,j_{1},...,j_{m})+O(n^{-1})|.
\]
All of the terms which decay more quickly than $\frac{1}{\sqrt{n}}$ are absorbed into the error term $O(\frac{1}{n})$. It 
remains to compute the constant $C(m)$. 
Define
\[
L_{n}:=\frac{m^{\frac{m}{2}}}{(2{\pi}n)^{\frac{m-1}{2}}}\left(\frac{m^{2}}{6n^{2}}\right)\underset{j_{1}+...+j_{m}=0}{\sum_{j_{m-1}=-\lfloor{b\sqrt{n}}\rfloor}^{\lfloor{b\sqrt{n}\rfloor}}...\sum_{j_{1}=-\lfloor{b\sqrt{n}}\rfloor}^{\lfloor{b\sqrt{n}}\rfloor}}e^{-\left( \frac{m}{2}\sum_{i=1}^{m}\frac{j_{i}^{2}}{n}\right)}\left ( \sum_{i=1}^{m-1}|j_{i}|^{3}+|j_{m}|^{3}\right ).
\]
We let
\[
E_{n}:=\frac{m^{\frac{m}{2}}}{(2{\pi}n)^{\frac{m-1}{2}}}\left(\frac{m^{2}}{6n^{2}}\right)\underset{j_{1}+...+j_{m}=0}{\sum_{j_{m-1}=-\lfloor{b\sqrt{n}}\rfloor}^{\lfloor{b\sqrt{n}}\rfloor}...\sum_{j_{1}=-\lfloor{b\sqrt{n}}\rfloor}^{\lfloor{b\sqrt{n}}\rfloor}}e^{-\left( \frac{m}{2}\sum_{i=1}^{m}\frac{j_{i}^{2}}{n}\right)}\left ( \sum_{i=1}^{m-1}|j_{i}|^{3}\right ) \leq
\]
\[
\frac{m^{\frac{m}{2}}}{(2{\pi}n)^{\frac{m-1}{2}}}\left(\frac{m^{2}}{6n^{2}}\right)\underset{j_{1}+...+j_{m}=0}{\sum_{j_{m-1}=-\lfloor{b\sqrt{n}}\rfloor}^{\lfloor{b\sqrt{n}}\rfloor}...\sum_{j_{1}=-\lfloor{b\sqrt{n}}\rfloor}^{\lfloor{b\sqrt{n}}\rfloor}}e^{-\left( \frac{m}{2}\sum_{i=1}^{m-1}\frac{j_{i}^{2}}{n}\right)}\left ( \sum_{i=1}^{m-1}|j_{i}|^{3}\right ).
\]
Approximating the sum by an integral, we have 
\[
E_{n}<(m-1)\sqrt{m}\cdot\frac{m^{2}}{6n^{2}}\cdot\left(\frac{n}{m}\right)^{3/2}E(|X|^{3})+O(n^{-1})
\]
where $X$ is a standard normal random variable.
Thus,
\[
E_{n}<\frac{2}{\sqrt{2\pi}}\frac{m(m-1)}{3\sqrt{n}}+O(n^{-1}).
\]
Now, consider
\[
F_{n}:=\left(\frac{m^{2}}{6n^{2}}\right)\frac{m^{\frac{m}{2}}}{(2{\pi}n)^{\frac{m-1}{2}}}\sum_{j_{m-1}=-\lfloor{b\sqrt{n}}\rfloor}^{\lfloor{b\sqrt{n}}\rfloor}...\sum_{j_{1}=-\lfloor{b\sqrt{n}}\rfloor}^{\lfloor{b\sqrt{n}}\rfloor}e^{-\left( \frac{m}{2}\sum_{i=1}^{m}\frac{j_{i}^{2}}{n}\right)}|j_{m}|^{3}.
\]
By maximizing $e^{-\frac{1}{2}x^{2}}|x|^{3},$
\[
F_{n}\leq{\left(\frac{m^{2}}{6n^{2}}\right)\frac{m^{\frac{m}{2}}}{(2{\pi}n)^{\frac{m-1}{2}}}\left(\frac{n}{m}\right)^{3/2}\sum_{j_{m-1}=-\lfloor{b\sqrt{n}}\rfloor}^{\lfloor{b\sqrt{n}}\rfloor}...\sum_{j_{1}=-\lfloor{b\sqrt{n}}\rfloor}^{\lfloor{b\sqrt{n}}\rfloor}e^{-\left( \frac{m}{2}\sum_{i=1}^{m-1}\frac{j_{i}^{2}}{n}\right)}e^{-3/2}3^{3/2}}.
\]
Approximating the sum by an integral, we have
\[
F_{n}<\sqrt{m}\cdot\left(\frac{m^{2}}{6n^{2}}\right)\cdot\left(\frac{n}{m}\right)^{3/2}e^{-3/2}3^{3/2}+O(n^{-1}).
\]
Thus,
\[
F_{n}<\frac{m}{6\sqrt{n}}e^{-3/2}3^{3/2}+O(n^{-1}).
\]
Hence, we have
\[
E_{n}+F_{n}<\frac{2}{\sqrt{2\pi}}\cdot\frac{m(m-1)}{3\sqrt{n}}+\frac{m}{6\sqrt{n}}e^{-3/2}3^{3/2}+O(n^{-1}) <
\frac{2m^{2}}{3\sqrt{2{\pi}n}}+O(n^{-1}).
\]
It then follows that 
\[
D_{n}\leq{\frac{2m^{2}}{3\sqrt{2\pi{n}}}+O(n^{-1})}.
\]
\end{proof}
Finally, we prove the CLT using Lemmas 1 - 5 and Theorem 1.  Recall from Lemma 2 that:
\[
q(n,k_{1},...k_{m}):=\left(\frac{1}{m}\right)^{n}{n\choose{k_{1},...,k_{m}}}f\left(\frac{\sum_{i=1}^{m}k_{i}o_{i}}{\sqrt{n}}\right).
\]
Recall from Lemma 3 that:
\[
d_n := \sqrt{m}\left (\frac{m}{2n\pi}\right )^{\frac{m-1}{2}}.
\]
For $n > 0$ and integers, $j_1,\ \ldots ,\ j_m$
whose sum is 0:
\[
H(n,j_{1},...,j_{m}):=\left(\frac{m^{2}}{4n^{2}}-\frac{m}{2n}\right)\sum_{i=1}^{m}j_{i}^{2}+\left(\frac{m^{2}}{6n^{2}}-\frac{m^{3}}{6n^{3}}\right)\sum_{{i}=1}^{m}j_{i}^{3}
-\frac{m^{3}}{3n^{3}}\sum_{i=1}^{m}j_{i}^{4}+O(n^{-1}), \text{and}
\]
\[
p(n,j_{1},...,j_{m}) :=d_ne^{H(n,j_{1},...,j_{m})}.
\]
\begin{theorem}
Let $(X_{i})$ be a sequence of i.i.d. random variables with mean $\mu$ and variance $\sigma^{2}$. Let $f:\mathbb{R}\rightarrow{\mathbb{R}}$ be a bounded, continuous function. Then, for each $\epsilon>0$, there exists $n_{1}\in\mathbb{N}$ such that 
\[
\left|E\left(f\left(\frac{X_{1}+...+X_{n}-n\mu}{\sigma\sqrt{n}}\right)\right)-E(f(Y))\right \vert<\epsilon(9\|f\|_{\infty}+2)
\]
for all $n\geq{n_{1}}$, where $Y$ is a standard normal random variable.
\end{theorem}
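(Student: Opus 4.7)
The plan is to chain together the pieces already established, being careful about the order in which $M$, $b$, and $n_1$ are chosen. First I would perform the usual reduction: replacing $X_i$ by $(X_i - \mu)/\sigma$ makes the inner expectation invariant, so it suffices to assume $\mu = 0$ and $\sigma = 1$. I then invoke the quantile construction of Section 2 to produce an i.i.d.\ sequence $(\widetilde X_i)$ on $[0,1]$ with the same one-dimensional distribution, so that
\[
E\!\left(f\!\left(\tfrac{X_1+\ldots+X_n-n\mu}{\sigma\sqrt{n}}\right)\right)
= \int_0^1 f\!\left(\tfrac{S_n(x)}{\sqrt{n}}\right)d\lambda(x),
\]
where $S_n = \sum_{i=1}^n \widetilde X_i$ is built from the Haar expansion as in Section~2.

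Next I fix $M \geq M_0$ as given by Proposition~1 and pay a first error of $\epsilon(6\|f\|_\infty + 1)$ to pass to the truncated Haar version $\int_0^1 f(S_{n,M}/(\sigma_M\sqrt{n}))\,d\lambda$. Once $M$ is fixed, $m = 2^{M+1}$ is fixed and this integral coincides, by the discussion following Lemma~1, with the explicit multinomial sum $\sum_{k_1+\ldots+k_m=n} \frac{1}{m^n}\binom{n}{k_1,\ldots,k_m} f\!\bigl(\tfrac{\sum k_i o_i}{\sqrt n}\bigr)$. I then apply Lemma~2 with $b \geq b_0$ to throw away tail indices, incurring a further error of $\epsilon\|f\|_\infty$, so that what remains is indexed by $k_i = \lfloor n/m \rfloor + j_i$ with $|j_i| \leq \lfloor b\sqrt{n}\rfloor$ and $\sum j_i = 0$.

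On the Gaussian side, Proposition~2 (with $b \geq b_1$ and $n \geq n_0$) approximates $E(f(Y))$ by a Riemann sum over the same lattice of $j$'s with error $\epsilon(\|f\|_\infty+1)$. Taking $b \geq \max\{b_0,b_1\}$, both sums are now indexed by the same set, and $\sum o_i = 0$ gives the key identity $\tfrac{1}{\sqrt n}\sum_i k_i o_i = \tfrac{1}{\sqrt n}\sum_i j_i o_i$, so the arguments of $f$ in the two sums agree term-by-term. The only remaining difference is in the weights, and by Theorem~1 this difference is controlled by
\[
\|f\|_\infty\, D_n \;\leq\; \|f\|_\infty\!\left(\tfrac{2m^{2}}{3\sqrt{2\pi n}} + O(n^{-1})\right).
\]

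Finally I choose $n_1$ so that for all $n \geq n_1$ one has (i) $n \geq n_0$, (ii) $n \geq b^{2}m^{2}$ so that Lemma~3 applies inside Theorem~1, and (iii) $\|f\|_\infty D_n < \epsilon\|f\|_\infty$. Combining the four errors by the triangle inequality gives exactly $\epsilon(6\|f\|_\infty + 1) + \epsilon\|f\|_\infty + \epsilon\|f\|_\infty + \epsilon(\|f\|_\infty + 1) = \epsilon(9\|f\|_\infty + 2)$. The real obstacle is not any single estimate but the bookkeeping of the quantifiers: the constant $\tfrac{2m^{2}}{3\sqrt{2\pi}}$ grows like $4^{M+1}$, so $M$ must be frozen before $b$, and $b$ before $n_1$; once this order is respected, every ingredient is already in hand and the argument reduces to assembly.
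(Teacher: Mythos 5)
Your proposal is correct and follows essentially the same route as the paper's proof: Lemma~1/Proposition~1 to truncate the Haar expansion at cost $\epsilon(6\|f\|_\infty+1)$, Lemma~2 to cut the multinomial tails at cost $\epsilon\|f\|_\infty$, Proposition~2 (Lemmas~4 and~5) for the Gaussian Riemann sum at cost $\epsilon(\|f\|_\infty+1)$, and Theorem~1 to bound the residual multinomial-versus-Gaussian discrepancy by $\|f\|_\infty D_n<\epsilon\|f\|_\infty$, summing to $\epsilon(9\|f\|_\infty+2)$. You also correctly flag the quantifier ordering ($M$, then $b$, then $n_1$), the need for $n\ge b^2m^2$ so Lemma~3 applies inside Theorem~1, and the identity $\sum_i k_io_i=\sum_i j_io_i$ via $\sum o_i=0$, all of which the paper uses (the last two only implicitly).
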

\begin{proof}
By Lemma 1, we reduce the problem to dealing with the projection of a multinomial random variable and we have
\[
\Delta_{n}:=\left|E\left(f\left(\frac{S_{n}}{\sqrt{n}}\right)\right)-E(f(Y))\right|<\left|E\left(f\left(\frac{S_{n,M}}{\sigma_{M}\sqrt{n}}\right)\right)-E(f(Y))\right|+{\epsilon}(6\|f\|_{\infty}+1) =
\]
\[
\left| \underset{k_{1}+...+k_{m}=n}{\sum_{k_{1}=0}^{n}...\sum_{k_{m}=0}^{n}}q(n,k_{1},...k_{m}) - E(f(Y))\right|+{\epsilon}(6\|f\|_{\infty}+1).
\]
By Lemma 2, we cut off the tails of the multinomial random variable to obtain
\[
\Delta_{n}<\left|\underset{k_{1}+...+k_{m}=n}{\sum_{k_{1}=\lfloor\frac{n}{m}\rfloor-\lfloor{b\sqrt{n}}\rfloor}^{\lfloor\frac{n}{m}\rfloor+\lfloor{b\sqrt{n}}\rfloor}...\sum_{k_{m-1}=\lfloor\frac{n}{m}\rfloor-\lfloor{b\sqrt{n}}\rfloor}^{\lfloor\frac{n}{m}\rfloor+\lfloor{b\sqrt{n}}\rfloor}}q(n,k_{1},...k_{m}) - E(f(Y))\right|+\epsilon(7\|f\|_{\infty}+1).
\]
By Lemma 3, we further simplify the multinomial sum to obtain
\[
\Delta_{n}<\left|\underset{j_{1}+...+j_{m}=0}{\sum_{j_{m-1}=-\lfloor{b\sqrt{n}}\rfloor}^{\lfloor{b\sqrt{n}}\rfloor}...\sum_{j_{1}=-\lfloor{b\sqrt{n}}\rfloor}^{\lfloor{b\sqrt{n}}\rfloor}}p(n,j_{1},...,j_{m})f\left( \frac{\sum_{i=1}^{m}j_{i}o_{i}}{\sqrt{n}}\right)-E(f(Y))\right|
+\epsilon(7\|f\|_{\infty}+1).
\]
Writing $Y$ as a sum of $m$ independent standard normal random variables, it follows that \\
$\frac{1}{\sqrt{m}}\sum_{i=1}^{m}o_{i}Y_{i}\overset{d}{=}N(0,1)$. By Lemma 4,
\[
E\left(f\left(\sum_{i=1}^{m}\frac{o_{i}Y_{i}}{\sqrt{m}}\right)\right)=E_{\mathcal{L(}Z)}\left(f\left(\sum_{i=1}^{m}\frac{o_{i}Y_{i}}{\sqrt{m}}\right)\right).
\]
\noindent By Lemma 5, we approximate the Gaussian integral by a Riemann sum. This approximation allows us to match the multinomial side and the Gaussian side and apply Theorem 1:
\[
\Delta_{n} < d_n\left\vert\underset{j_{1}+...+j_{m}=0}{\sum_{j_{m-1}=-\lfloor{b\sqrt{n}}\rfloor}^{\lfloor{b\sqrt{n}}\rfloor}...\sum_{j_{1}=-\lfloor{b\sqrt{n}}\rfloor}^{\lfloor{b\sqrt{n}}\rfloor}}
f\left( \frac{\sum_{i=1}^{m}j_{i}o_{i}}{\sqrt{n}}\right)\left ( e^{H(n,j_{1},...,j_{m})} - e^{-\left( \frac{m}{2}\sum_{i=1}^{m}\frac{j_{i}^{2}}{n}\right)}\right )\right\vert + \epsilon(8\|f\|_{\infty}+2) < 
\]
\[
\epsilon(8\|f\|_{\infty}+2)+\frac{2m^{2}\|f\|_{\infty}}{3\sqrt{2\pi{n}}}+O(n^{-1})<\epsilon(9\|f\|_{\infty}+2)
\]
for all $n\geq{n_{1}}$.
\end{proof}

\end{document}